\theoremstyle{plain}
\newtheorem{theorem}[equation]{Theorem}
\newtheorem{corollary}[equation]{Corollary}
\newcommand{\real}{{\mathbb R}}
\newcommand{\nats}{{\mathbb N}}
\newcommand{\zed}{{\mathbb Z}}
\newcommand{\CF}{{\sf CF}}
\newcommand{\CharFunc}{{\mathbf{1}}}
\renewcommand{\dim}{{\sf dim}}              
\newcommand{\Disc}{{\mathbb D}}   
\newcommand{\Field}{{\mathbb F}}        
\newcommand{\Grading}{\bullet}       
\newcommand{\Radon}{{\mathcal R}}       
\newcommand{\sphere}{{\mathbb S}}   
\newcommand{\Sphere}{{\mathbb S}}   
\title{Persistent homology and Euler integral transforms}
\author{Robert Ghrist}
\address{Departments of Mathematics and Electrical \& Systems Engineering, University of Pennsylvania}
\email{ghrist@math.upenn.edu}
\thanks{Work supported by the Office of the Assistant Secretary of Defense Research \& Engineering through ONR N00014-16-1-2010.}
\author{Rachel Levanger}
\address{Department of Electrical \& Systems Engineering, University of Pennsylvania}
\email{levanger@seas.upenn.edu}
\author{Huy Mai}
\address{Department of Mathematics, University of Pennsylvania}
\email{huymai@sas.upenn.edu}
\begin{document}
\maketitle
\begin{abstract}
The Euler calculus -- an integral calculus based on Euler characteristic as a valuation on constructible functions -- is shown to be an incisive tool for answering questions about injectivity and invertibility of recent transforms based on persistent homology for shape characterization.
\end{abstract}

\section{Injective transforms based on persistent homology.}
The past fifteen years have witnessed the rise of Topological Data Analysis as a novel means of extracting structure from data. In its most common form, data means a point cloud sampled from a subset of Euclidean space, and structure comes from converting this to a filtered simplicial complex and applying persistent homology (see \cite{C,EH} for definitions and examples). This has proved effective in a number of application domains, including genetics, neuroscience, materials science, and more.

Recent work considers an inverse problem for shape reconstruction based on topological data. In particular, \cite{TMB} defines a type of transform which is based on persistent homology as follows. Given a (reasonably tame) subspace $X\subset\real^n$, one considers a function from $\sphere^{n-1}\times\nats$ to the space of persistence modules over a field $\Field$. For those familiar with the literature, this {\em persistent homology transform} records sublevelset homology barcodes in all directions ($\Sphere^{n-1}$) and all gradings ($\nats$).
The paper \cite{TMB} contains the following contributions.
\begin{enumerate}
\item For compact nondegenerate shapes in $\real^2$ and compact triangulated surfaces in $\real^3$, the persistent homology transform is injective; thus one can in principle reconstruct the shapes based on the image in the space of persistence modules. The proof is an algorithm.
\item It is claimed that the proof survives reduction to the Euler characteristic, so that knowing all Euler characteristics of the intersection of the shape with all half-spaces in $\real^2$ or $\real^3$ (resp.) yields a likewise injective transform.
\item Certain results on {\em sufficient statistics} follow from this injectivity, which are then applied to shape characterization (see also \cite{CMCMR}). This is effected by discretizing the Euler characteristic transform both in direction and along the filtration.
\end{enumerate}

This note reformulates the persistent homology transform of \cite{TMB} in terms of Euler calculus on constructible functions. Though a more abstract framework, the theory effortlessly permits the following results.
\begin{enumerate}
\item The Euler characteristic reduction of the persistent homology transform extends to an integral transform on constructible functions.
\item This integral transform has an explicit inverse, with no restrictions on dimension, manifold structure, or nondegeneracy (beyond constructibility).
\item This integral transform is shown to be but one of several invertible transforms that characterizes shapes with topological data.
\end{enumerate}

{\em Nota bene:} upon posting this preprint, we learned of independent work-in-progress. The work of Curry {\em et al.} \cite{CMT} contains results (1) and (2) above with a similar proof. In addition, \cite{CMT} has a number of further interesting results concerning reconstruction from finite samplings. For a restricted class of graphs, work of Belton {\em et al.} also has a finite reconstruction algorithm \cite{BF+}. 

\section{Euler calculus.}
Euler characteristic is an integer-valued ``compression'' of a finitely-nonzero sequence $V_\Grading$ of finite-dimensional vector spaces over a field $\Field$ given by the alternating sum of dimensions. Among complexes, Euler characteristic is an invariant of quasi-isormorphism, meaning that for $C_\Grading$ a complex of vector spaces and $H_\Grading$ its homology, $\chi(H_\Grading)=\chi(C_\Grading)$. On compact cell complexes, $\chi$ is well-defined and a homotopy invariant. Euler characteristic is additive on compact cell complexes, meaning that for $A$ and $B$ such, $\chi(A\cup B) = \chi(A) + \chi(B) - \chi(A\cap B)$.

It is profitable to pass from the realm of compact cell complexes to more general {\em definable} or {\em constructible} subsets of $\real^n$  by using compactly-supported cohomology. This, combined with results from {\em o-minimal structures} \cite{vdd} makes it trivial to work with an additive and homeomorphism-invariant Euler characteristic on definable sets. For the reader unfamiliar with the o-minimal theory, it suffices to substitute {\em semialgebraic} for {\em definable} or {\em constructible} in what follows.

For $X$ a definable subset of Euclidean space, the {\em constructible functions} on $X$ are functions $h\colon X\to\zed$ that have definable (and locally finite) level sets. The set of constructible functions, $\CF(X)$, has the structure of a sheaf with the obvious restriction maps.\footnote{This structure, though very helpful for generating clean definitions, can be ignored by the reader for whom sheaves are unfamiliar.} The Euler integral on $X$ is simply the functional
\begin{equation}
\int_X\cdot \,d\chi\colon \CF(X)\to\zed
\quad {\textrm{taking}} \quad
\CharFunc_{\sigma}\mapsto(-1)^{\dim\ \sigma}
\end{equation}
for each (open) definable simplex $\sigma$. As all definable sets are finitely definably triangulated, the Euler integral is well-defined and additive. Euler calculus possesses a Fubini Theorem, a convolution operation, and much more.  For a thorough introduction, see \cite{CGR}.

\section{Euler-Radon transform \& inversion.}
%
%
%
%

The first application of Euler calculus to integral transforms was given by Schapira in a seminal paper \cite{Schapira} that defined a topological Radon transform and gave conditions for an inverse to exist. Our summary uses compactly-supported constructible functions $\CF_c(-)$ and follows the reformulation in \cite{BGL} to weighted kernels. Consider a pair $(X, Y)$ of definable spaces and $K\in\CF(X\times Y)$ a kernel --- a constructible function on the product. The Radon transform $\Radon_K\colon\CF(X)\to\CF(Y)$ is defined explicitly via the formula
\begin{equation}
\label{eq:Radon}
    (\Radon_K h)(y) = \int_X h(x)\,K(x,y)\,d\chi(x) .
\end{equation}
The principal result of \cite{Schapira} is the following. Consider a second kernel $K'\in\CF(Y\times X)$ with Radon transform $\Radon_{K'}\colon\CF(Y)\to\CF(X)$. If there are constants $\lambda, \mu$ such that
\begin{equation}
\label{eq:RadonHypothesis}
    \int_Y K(x,y)K'(y,x') \, d\chi(y) = (\mu-\lambda)\delta_{\Delta} + \lambda ,
\end{equation}
for $\Delta\subset X\times X$ the diagonal, then
\begin{equation}
\label{eq:RadonInverse}
    (\Radon_{K'}\circ\Radon_K)h = (\mu - \lambda)h + \lambda\left( \int_X h\, d\chi \right)\CharFunc_{X} .
\end{equation}
Thus, when $\lambda\neq\mu$, one can recover $h$ exactly from the inverse transform (followed by the appropriate rescaling).

The point of this note is to show that working with Euler integral transforms is preferable to mapping a set into a space of persistence modules, as the Euler transform provides a more efficient representation that yields full invertibility, not merely injectivity.

%
%
%

\section{Inversion for the sublevelset Euler integral transform.}
The persistent homology transform of \cite{TMB} is easily converted into a Radon integral transform. Let $X=\real^n$ and $Y=\Sphere^{n-1}\times\real$ with kernel $K$ the indicator function on the set $\{(x,(\xi, t)) \colon x\cdot\xi\leq t\}$. Given the resemblance to sublevelset filtrations in persistent homology, we denote this the {\em sublevelset Euler integral transform}.

\begin{theorem}
The sublevelset Euler integral transform $\Radon_K\colon\CF_c(X)\to\CF(Y)$ is invertible for all dimensions $n$.
\end{theorem}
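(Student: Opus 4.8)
The plan is to apply the Schapira inversion criterion (\ref{eq:RadonHypothesis})--(\ref{eq:RadonInverse}) with a judiciously chosen dual kernel, reducing the entire problem to a two-line Euler-integral computation. The natural candidate for $K'\in\CF(Y\times X)$ is the indicator of the complementary closed half-space,
\begin{equation}
K'((\xi,t),x') = \CharFunc_{\{x'\cdot\xi \geq t\}},
\end{equation}
so that $K$ records points below the hyperplane $x\cdot\xi = t$ and $K'$ records points above it. First I would form the composition $\Radon_{K'}\circ\Radon_K$ and, invoking the Fubini theorem for Euler calculus, bring the $\chi$-integral over $Y=\Sphere^{n-1}\times\real$ to the inside, so that everything hinges on evaluating the kernel product $\int_Y K(x,(\xi,t))\,K'((\xi,t),x')\,d\chi(\xi,t)$.

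The key step is to integrate iteratively, first in $t$ and then in $\xi$. For fixed $\xi$ the integrand is the indicator of $\{t : x\cdot\xi \leq t \leq x'\cdot\xi\}$, a closed interval when $x\cdot\xi \leq x'\cdot\xi$ and empty otherwise; since a nonempty compact interval has $\chi = 1$, the $t$-integral collapses to $\CharFunc_{\{(x'-x)\cdot\xi \geq 0\}}$. Integrating this over the sphere then measures the Euler characteristic of a region of $\Sphere^{n-1}$: for $x\neq x'$ the set $\{(x'-x)\cdot\xi \geq 0\}$ is a closed hemisphere, which is compact and contractible, hence $\chi = 1$; whereas for $x = x'$ the integrand is identically $1$ and one obtains $\chi(\Sphere^{n-1}) = 1 + (-1)^{n-1}$. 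Thus the kernel product equals $1 + (-1)^{n-1}\delta_\Delta$, which is precisely hypothesis (\ref{eq:RadonHypothesis}) with $\lambda = 1$ and $\mu-\lambda = (-1)^{n-1}$.

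Since $\mu\neq\lambda$ in every dimension, (\ref{eq:RadonInverse}) yields
\begin{equation}
(\Radon_{K'}\circ\Radon_K)h = (-1)^{n-1}h + \left(\int_X h\,d\chi\right)\CharFunc_X.
\end{equation}
The scalar $\int_X h\,d\chi$ is independent of the position of $h$ and is read off $\Radon_K h$ as its stabilized value $(\Radon_K h)(\xi,t)$ for $t\gg 0$; subtracting the corresponding multiple of $\CharFunc_X$ and multiplying by $(-1)^{n-1}$ recovers $h$ exactly, establishing invertibility for all $n$.

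The main obstacle I anticipate is not the algebra but the bookkeeping forced by non-compactness. Because $Y$ carries a non-compact $\real$-factor, $\Radon_K$ lands in $\CF(Y)$ rather than $\CF_c(Y)$, and the term $\CharFunc_X$ is likewise not compactly supported, so the Fubini interchange and the finiteness of the iterated integral must be justified directly from the compact support of $h$ together with the half-space geometry (the product $K'\cdot\Radon_K h$ is supported on a bounded range of $t$ for each $\xi$). A secondary but genuine hazard is the open/closed convention in the indicators: replacing a non-strict inequality by a strict one turns a closed hemisphere into an open one and flips its Euler characteristic, so the two kernels must be matched so that both the $t$-integral and the $\xi$-integral see closed, compact domains. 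Getting this right is what makes the clean constant $(-1)^{n-1}$ appear.
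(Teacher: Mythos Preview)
Your proposal is correct and follows essentially the same approach as the paper: the paper chooses the identical dual kernel $K'=\CharFunc_{\{x'\cdot\xi\geq t\}}$, fibers the set $K_x\cap K_{x'}'$ over $\xi\in\Sphere^{n-1}$ (noting that the $t$-slice is either empty or a compact interval), and obtains $\mu=\chi(\Sphere^{n-1})=1-(-1)^n$ and $\lambda=1$. Your iterated Fubini presentation and the remarks on open/closed conventions make the same computation more explicit, but the argument is materially identical.
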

\begin{proof}
Consider as the dual kernel $K'$ the indicator function of the set
\[
    \{(x,(\xi, t)) \colon x\cdot\xi\geq t\} .
\]
One observes the following.

Denote by $K_x$ the set of all $(\xi, t)$ such that $x$ lies in the halfspace $x\cdot\xi\leq t$. Likewise with the dual fiber $K_x'$ reversing the inequality. The intersection $K_x\cap K_x'$ is the set of $(\xi, t)$ with the property that for each $\xi\in\sphere^{n-1}$, there is a unique $t$ at which $x\cdot\xi=t$. Thus, $\mu = \chi(K_x\cap K_x') = \chi(\sphere^{n-1}) = 1-(-1)^n$.

For $x\neq x'$, the intersection $K_x\cap K_{x'}'$ is the set of all $(\xi, t)$ such that $x\cdot\xi\leq t$ and $x'\cdot\xi\geq t$. For fixed $\xi\in\sphere^{n-1}$, the set of compatible $t$ is empty if $(x-x')\cdot\xi<0$ and is a compact interval when $(x-x')\cdot\xi\geq 0$.  Thus, $K_x\cap K_{x'}'$ is a compact contractible set, and $\lambda = \chi(K_x\cap K_{x'}') = 1$.

As $\lambda\neq\mu$, the transform is invertible for all $n$.
\end{proof}


Note that we restrict our attention to only compactly-supported functions on $X$ in order to use the compactly-supported version of Euler characteristic, which is compatible with our definition of the Euler integral.

\begin{corollary}
The persistent homology transform of \cite{TMB} and the smoothed Euler characteristic transform of \cite{CMCMR} are invertible on constructible subsets of $\real^n$ for all $n$.
\end{corollary}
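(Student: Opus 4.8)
The plan is to recognize both transforms as repackagings of the sublevelset Euler integral transform $\Radon_K$ applied to the indicator function of the shape, and then to invoke the Theorem. Let $M\subset\real^n$ be a compact constructible set, so that $\CharFunc_M\in\CF_c(\real^n)$. I would first verify the identity
\[
(\Radon_K\CharFunc_M)(\xi,t)=\int_{\real^n}\CharFunc_M(x)\,K(x,(\xi,t))\,d\chi(x)=\chi\bigl(M\cap\{x\colon x\cdot\xi\leq t\}\bigr),
\]
so that the value of $\Radon_K\CharFunc_M$ at $(\xi,t)$ is exactly the Euler characteristic of the sublevelset of the height function $x\mapsto x\cdot\xi$ on $M$ at level $t$. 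This is precisely the Euler characteristic reduction of the persistent homology transform of \cite{TMB}, and it is also the (uncentered) Euler characteristic curve underlying the smoothed transform of \cite{CMCMR}. Since $M$ is compact, each such sublevelset is compact, so the compactly-supported Euler characteristic used in the integral agrees with the ordinary Euler characteristic appearing in the homological definitions, and the dictionary is exact.

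Granting this identification, the Theorem applies directly: because $\lambda\neq\mu$, the transform $\Radon_K$ is invertible on all of $\CF_c(\real^n)$, so $\Radon_{K'}\circ\Radon_K$ recovers $\CharFunc_M$ after the appropriate rescaling of \eqref{eq:RadonInverse}. As a set is determined by its indicator function, this recovers $M$ itself. Thus the Euler characteristic transform is invertible on compact constructible subsets of $\real^n$ for every $n$.

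To pass from the Euler characteristic transform to the full persistent homology transform, I would observe that the former factors through the latter: the barcode recorded in direction $\xi$ determines, at each level $t$ and in each grading, the dimension of the sublevelset homology, whose graded alternating sum is exactly $\chi(M\cap\{x\cdot\xi\leq t\})$. Hence the Euler characteristic transform is a function of the persistent homology transform, and invertibility of the former forces full recoverability of $M$ from the latter. For the smoothed Euler characteristic transform of \cite{CMCMR}, I would note that for each fixed $\xi$ it is obtained from the curve $t\mapsto\chi(M\cap\{x\cdot\xi\leq t\})$ by integration in $t$ together with an affine normalization; these operations are invertible, since differentiation recovers the integer-valued curve up to the known normalization. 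Thus the smoothed transform determines the Euler characteristic curve and carries the same information, and invertibility transfers.

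The main obstacle is not analytic but lies in pinning down this dictionary precisely: one must check that the Euler characteristic reduction of the persistence-module-valued transform of \cite{TMB} is literally $\Radon_K\CharFunc_M$ --- including the bookkeeping that compactly-supported and ordinary Euler characteristic coincide on the compact sublevelsets in play --- and that the smoothing of \cite{CMCMR} is a bijective reparametrization of the Euler characteristic curve rather than a lossy summary. Once these identifications are in place, the invertibility is supplied entirely by the Theorem.
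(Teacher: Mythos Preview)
Your proposal is correct and matches the paper's intent: the paper gives no explicit proof of the Corollary, treating it as immediate from the Theorem, and your argument spells out precisely the identification (that the Euler characteristic transform is $\Radon_K\CharFunc_M$, that the persistent homology transform determines it via alternating Betti numbers, and that the smoothing is a bijective post-processing) needed to make that immediacy rigorous. There is nothing to correct.
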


\section{Additional invertible transforms.}
The sublevelset Euler integral transform is but one of several invertible transforms on $X=\real^n$. As the Euler calculus appears underutilized, and as these transforms are so simple to define and invert, it seems appropriate to recall some known invertible topological integral transforms.
\begin{enumerate}
\item The original example of Schapira's inversion formula has $Y$ equal to the affine Grassmannian of hyperplanes in $X=\real^n$. Thus, recording all Euler characteristics of all flat codimension-1 slices is an invertible transform (with self-dual kernel).
\item The article \cite{BGL} gives several other examples of invertible transforms, including the following. Let $C$ be a compact convex definable subset of $X=\real^n=Y$ with kernel $K$ the indicator function on the set $\{x-y\in C\}$. Thus, $\Radon_K$ is a constructible ``blur'' with filter $C$. This is an invertible transform for all $n$.
\end{enumerate}

These examples are far from exhaustive. To close, we present a few novel invertible topological integral transforms.
\begin{enumerate}
\item Schapira's original example with the affine Grassmannian has a stereographic variant. Let $X=\Disc^n$ be a closed ball and $Y=\partial\Disc\times\real^{\geq 0}$. The (self-dual) kernel is given as the indicator function on the set $\{||x-y||=t\}$: one measures distance to a point on the boundary of $X$. The resulting transform is invertible for all $n$ with $\mu=\chi(\Sphere^{n-1})$ and $\lambda=\chi(\Sphere^{n-2})$.
\item The previous example can be modified to a sublevel/superlevel setting, analogous to the persistent Euler integral transform of this note. Keeping $X$ and $Y$ as before, one can set $K$ to be the indicator function on the set $\{||x-y||\leq t\}$ with the dual kernel $K'$ reversing the inequality. This transform is invertible for all $n$ with $\mu$ and $\lambda$ unchanged. These two examples suggest generalizations to other geometric domains with boundary.
\item Let $X=\real^n=Y$ with $\gamma$ a codimension-0 cone in $\real^n$ with vertex at the origin that does not contain a half-space. Let $K=K'$ be the indicator function over the set $\{(x, y)\colon x-y\in(\gamma\cup -\gamma)$. Then, for all $n>1$, this transform is invertible with $\mu = -1$ and $\lambda = 0$.
\end{enumerate}

In the same manner that the persistent Euler integral transform is discretized (and smoothed) to vectorize shape data \cite{TMB,CMCMR}, one can discretize any of the invertible Euler integral transforms defined above to use as a statistic for shapes (or more general constructible functions).


\end{document}